\newtheorem{thm}{ Theorem}[section]
\newtheorem{lemG}{ Lemma}
\theoremstyle{definition}
\newtheorem{ex}[thm]{ Example}
\newtheorem{rem}[thm]{ Remark}
\newtheorem{defn}[thm]{ Definition}
\newcommand{\R}{\mathbb{R}} 
\newcommand{\rb}{\raisebox}
\newcommand{\ig}{\includegraphics}
\def\sminus{\smallsetminus}
\def\lw{\langle x|D|S \rangle}
\def\lwg{\langle \a|G|S \rangle}
\def\lwa{\langle \a|A|S \rangle}
\def\plw{\langle D|S \rangle}
\def\plwg{\langle G|S \rangle}
\def\plwa{\langle A|S \rangle}
\def\lvw#1#2{\langle #1|#2|S \rangle}
\def\scp#1#2{\langle #1,#2 \rangle}
\def\e{\varepsilon}
\def\a{\alpha}
\def\A{\mathscr{A}}
\def\I{\mathcal{I}}
\def\bp{\risS{-2}{bp}{}{10}{0}{0}}
\newcommand{\Z}{\mathbb{Z}} 
\newcommand\risS[6]{\rb{#1pt}[#5pt][#6pt]{\begin{picture}(#4,15)(0,0)
  \put(0,0){\ig[width=#4pt]{#2.eps}} #3
     \end{picture}}}
\def\ard#1{\risS{-12}{#1}{}{25}{15}{17}}
\begin{document}

\title[Elementary combinatorics of the HOMFLYPT polynomial]{Elementary
   combinatorics of the HOMFLYPT polynomial}
\author{SERGEI CHMUTOV and MICHAEL POLYAK}

\address{The Ohio State University, Mansfield,
1680 University Drive, Mansfield, OH 44906. {\tt
chmutov@math.ohio-state.edu}\linebreak Department of mathematics,
Technion, Haifa 32000, Israel. {\tt polyak@math.technion.ac.il}}

\begin{abstract}
We explore Jaeger's state model for the HOMFLYPT polynomial. We
reformulate this model in the language of Gauss diagrams and use it
to obtain Gauss diagram formulas for a two-parameter family of
Vassiliev invariants coming from the HOMFLYPT polynomial. These
formulas are new already for invariants of degree 3.
\end{abstract}

\keywords{HOMFLYPT polynomial, Vassiliev invariants, Gauss diagrams,
arrow diagrams}

\subjclass[2000]{57M25, 57M27}

\maketitle

\newcommand{\unkn}{\rb{-4.2mm}{\ig[width=10mm]{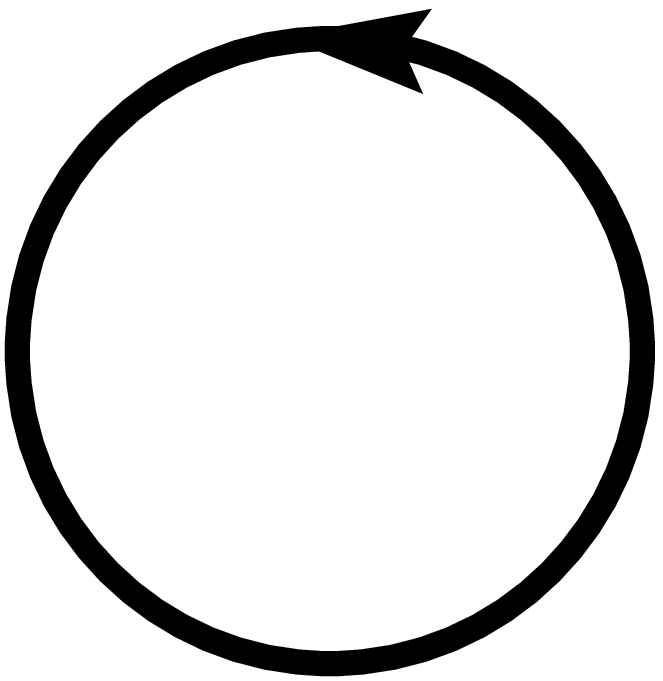}}}
\newcommand{\twoup}{\rb{-4.2mm}{\ig[width=10mm]{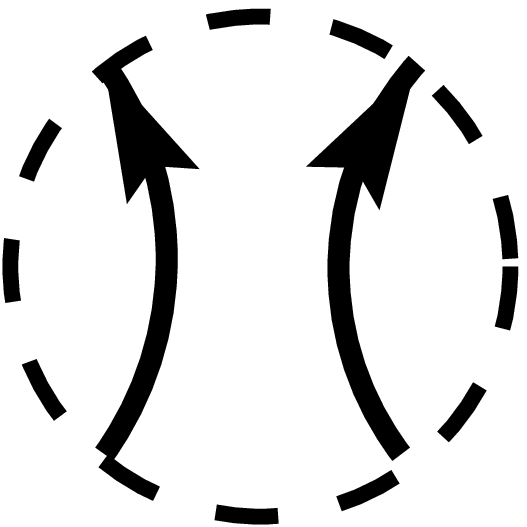}}}
\newcommand{\rlints}{\rb{-4.2mm}{\ig[width=10mm]{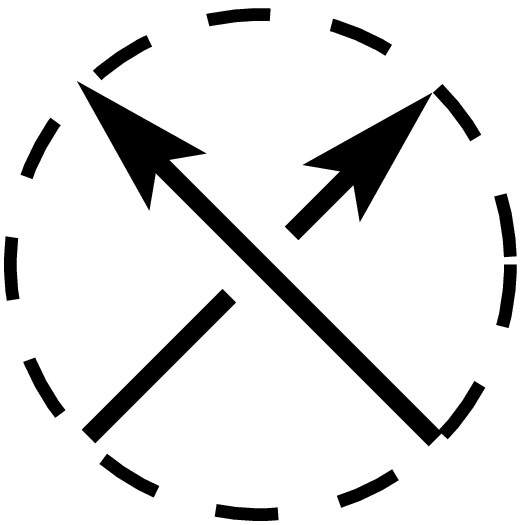}}}
\newcommand{\lrints}{\rb{-4.2mm}{\ig[width=10mm]{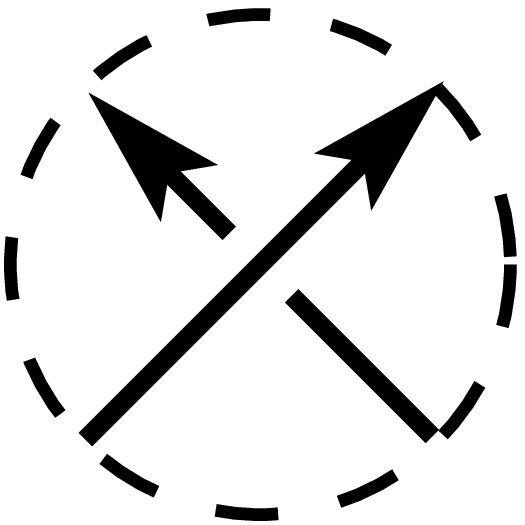}}}

\section*{Introduction} \label{s:intro}
The {\em HOMFLYPT polynomial} $P(L)$ is an invariant of oriented
link $L$. It is defined as the Laurent polynomial in two variables
$a$ and $z$ with integer coefficients satisfying the following skein
relation and the initial condition:
\begin{equation}\label{eq:skein}
aP(\lrints) - a^{-1}P(\rlints) =  zP(\twoup)\ ;\qquad P(\unkn)\quad
= \quad 1\,.
\end{equation}
If $L$ is an unlink with $m$ components then
$P(L)=\Bigl(\frac{a-a^{-1}}{z}\Bigr)^{m-1}$. The proof of the
existence of such an invariant is long and cumbersome. It was
established simultaneously and independently by five groups of
authors \cite{HOM,PT}.

This paper is devoted to Gauss diagram formulas for Vassiliev
invariants coming from the HOMFLYPT polynomial. It is known
\cite{GPV} that any Vassiliev knot invariant may be presented by a
Gauss diagram formula. This type of formulas is the simplest for
computation purposes; however, the algorithm for producing them is
complicated and until recently only few lower degree cases were
described explicitly. The first description of such formulas for an
infinite family of Vassiliev invariants was given in \cite{CKR},
where the coefficients of the Conway polynomial were considered.
This paper generalizes the result of \cite{CKR} to the HOMFLYPT
polynomial.

We use a non-standard change of variables (used formely in
\cite{G2}), leaving $z$ alone and plugging in $a=e^h$ to obtain a
power series $\sum_{k,l}p_{k,l}h^kz^l$. The coefficients $p_{k,l}$
are Vassiliev invariants of degree $\leqslant k+l$, see \cite{G2}.
We give the Gauss diagram formulas for $p_{k,l}$ for arbitrary
$k,l$. These formulas are new already for invariants of degree 3.

The paper is organized in the following way. In Section
\ref{s:homfly} we start from the scheme of \cite{H, LM, PT},
extracting from it an explicit state model for the HOMFLYPT
following \cite{Ja} in Section \ref{s:jaeger}. We then briefly
review the notions of Gauss diagrams in Section \ref{s:gaus-diagr}
and reformulate the state model in these terms in Section
\ref{s:jaeger-ga}. The expansion of $P(L)$ into power series in $h$
and $z$ is considered in Section \ref{s:vas-from-homfly}. In the
same section we  remind the definition of the Gauss diagram formulas
for Vassiliev invariants. Finally, we describe the Gauss diagram
formulas for $p_{k,l}$ in Section \ref{s:result}. In the last
Section \ref{s:example} we analyze low degree cases in details.

Note that using instead of \eqref{eq:skein} the skein relation for
the two-variable Kauffman polynomial, one gets a similar state
model. We plan to consider the resulting Gauss diagram formulas in a
forthcoming paper.

We are grateful to O.~Viro, L.~Traldi, and to the anonymous referee
for numerous corrections to the first version of the paper and
useful remarks. This work has been done when both authors were
visiting the Max-Plank-Institut f\"ur Mathematik in Bonn, which we
would like to thank for excellent work conditions and hospitality.
The second author was supported by a grant 3-3577 of the Israel
Ministry of Science and ISF grant 1261/05.

\section{HOMFLYPT and descending diagrams} \label{s:homfly}

The skein relation \eqref{eq:skein} allows one to calculate the
HOMFLYPT polynomial of a link. Following \cite{H, LM, PT}, this can
be done by ordering a link diagram and then transforming it into a
descending diagram. We call a diagram $D$ {\it ordered}, if its
components $D_1$, $D_2$,\dots ,$D_m$ are ordered and on every
component a (generic) base point is chosen. An ordered diagram is
{\it descending}, if $D_i$ is above $D_j$ for all $i<j$ and if for
every $i$ as we go along $D_i$ starting from its base point along
the orientation we pass each self-crossing first on the overpass and
then on the underpass.

An elementary step of the algorithm computing $P(L)$ consists of the
following procedure. Suppose that $D$ is an ordered diagram and that
the subdiagram $D_1,\dots,D_{i-1}$ is already descending. We go
along $D_i$ (starting from the base point) looking for the first
crossing which fails to be descending. At such a crossing $x$ we
change it using the skein relation. Namely, depending on the sign
$\e$  (the local writhe) of the crossing, we express $P(D)$ as
\begin{equation}\label{eq:descend}\begin{array}{ccl}
P(\lrints)&=&a^{-2}P(\rlints) + a^{-1}zP(\twoup)\vspace{5pt}\\
P(\rlints)&=&a^{2}P(\lrints) - azP(\twoup)
\end{array}\end{equation}
Denote the corresponding diagrams $D^\e$, $D^{-\e}$, $D^0$.

The ordering of $D=D^\e$ induces an ordering of $D^{-\e}$ (in an
obvious way); the ordering of $D^0$ requires some explanation. If
$x$ was a crossing of $D_i$ with $D_j$, $j>i$, then these two
components merge into a single component $D^0_i$ of $D^0$, with a
base point being the base point of $D_i$. If $x$ was a self-crossing
of $D_i$, then $D_i$ splits into two components: $D^0_i$, which
contains the base point of $D_i$, and $D^0_{i+1}$, where we choose
the base point in a neighborhood of $x$. In both cases the order of
remaining components shifts accordingly.

The diagrams $D^{-\e}$, $D^0$ are ``more'' descending than $D^\e$.
At the next step we apply the same procedure to each of them.

\begin{ex}\label{ex:1a} For the trefoil $3_1$ the algorithm consists
of two steps, illustrated in the figure below. The diagram $D^+$
appearing in the first step is already descending; the diagram $D^0$
is not, so the second step is needed to transform it.

$$\risS{-15}{31alg}{\put(20,60){\mbox{$D^-$}}
         \put(-60,85){\mbox{Step 1:}}\put(-60,20){\mbox{Step 2:}}
         \put(85,60){\mbox{$D^+$}}\put(150,60){\mbox{$D^0$}}
         \put(25,-5){\mbox{$D^-$}}\put(98,-5){\mbox{$D^+$}}
         \put(140,-5){\mbox{$D^0$}}}{160}{90}{25}
$$
Hence $P(3_1)=a^2\cdot 1 - az\Bigl(
a^2\cdot\frac{a-a^{-1}}{z}-az\cdot 1\Bigr) = (2a^2-a^4)+a^2z^2$.
\end{ex}

\section{State model reformulation} \label{s:jaeger}

The state model of \cite{Ja} for the HOFMLYPT polynomial is a
convenient reformulation of the algorithm of Section \ref{s:homfly}.

A {\it state} $S$ on a link diagram $D$ is a subset of its
crossings. The HOMFLYPT polynomial is going to be a sum over the
states. Let $D(S)$ be the link diagram obtained by smoothing every
crossing in $S$ according to orientation and $c(S)$ be the number of
its components. We will not use the topology of $D(S)$, however its
combinatorics will determine the contribution of the state $S$ to
the state sum. The contribution will be a product of a global weight
of the state as a whole, $\bigl(\frac{a-a^{-1}}{z}\bigr)^{c(S)-1}$
and local weights of crossings of the diagram.

The ordering of $D$ induces an ordering of $D(S)$ (in the way
explained in Section \ref{s:homfly} above) and thus determines a
tracing of the link $D(S)$. The local weight $\lw$ of a crossing $x$
of $D$ depends on the first passage of a neighborhood of $x$ in the
tracing and on the sign $\e$ of $x$. Namely, if $x$ is in $S$ and we
approach $x$ first time on an overpass of $D$ then $\lw=0$ (since
such a situation does not occur in the above algorithm). If we
approach $x$ on an underpass of $D$ then $\lw=\e a^{-\e}z$ (i.e.,
the coefficient of $D^0$ in \eqref{eq:descend}). In the case if $x$
does not belong to $S$ and we approach $x$ first time on an overpass
then $\lw=1$ (since in the above algorithm we do not apply the skein
relation to $x$). If we approach $x$ on an underpass then
$\lw=a^{-2\e}$ (i.e., the coefficient of $D^{-\e}$ in
\eqref{eq:descend}). These assignments can be summarized in the
following figure.
$$\begin{array}{c||c|c|c|c}
\rb{5pt}{First
passage:}&\risS{10}{br2}{}{25}{0}{0}&\risS{0}{br1}{}{25}{20}{3}&
 \risS{0}{br4}{}{25}{0}{0}&\risS{0}{br3}{}{25}{0}{0} \\ \hline\hline
\risS{-8}{cr_p}{}{25}{15}{10} & 0 & a^{-1}z & 1 &  a^{-2}\\ \hline
\risS{-8}{cr_m}{}{25}{15}{10} & -az & 0 & a^2 & 1
\end{array}
$$
Denote by $\plw:=\prod_x \lw$ the product of local weights of all
crossings. For a link $L$ with a diagram $D$ we have
\cite[Proposition 2]{Ja}:
$$P(L)=\sum_S\ \ \plw\cdot \left(\frac{a-a^{-1}}{z}\right)^{c(S)-1}
$$

\begin{ex} Consider a based trefoil diagram $D$ and a state $S$
consisting of one crossing $\{x_1\}$.
$$D=\risS{-15}{31}{
         \put(17,-3){\mbox{\scriptsize $x_1$}}
         \put(34,23){\mbox{\scriptsize $x_2$}}
         \put(0,26){\mbox{\scriptsize $x_3$}}}{40}{20}{20} \hspace{2cm}
  D(S)=\risS{-20}{conwlin}{}{40}{20}{0}
$$
The tracing of $D(S)$ first approaches the crossing $x_1$ on the
strand which was an underpass in $D$. So its weight will be
$\lvw{x_1}{D}=-az$. Similarly the weights of the other two crossings
are $\lvw{x_2}{D}=a^2$ and $\lvw{x_3}{D}=1$. So the total
contribution from this state will be equal to
$-a^3z\bigl(\frac{a-a^{-1}}{z}\bigr) = a^2-a^4$. The next table
shows the contributions from all eight states. Non-zero weights come
from states corresponding to descending diagrams appearing in the
end of the algorithm, see Example \ref{ex:1a}.
$$\begin{array}{c|c|c|c|c|c|c|c}
\emptyset & \{x_1\} & \{x_2\} & \{x_3\} & \{x_1,x_2\} & \{x_1,x_3\}
&
   \{x_2,x_3\} & \{x_1,x_2,x_3\} \\ \hline \makebox(0,12){}
a^2& a^2-a^4 & 0 & 0 & a^2z^2 & 0 & 0 & 0
\end{array}
$$
So we recover the result of Example \ref{ex:1a}: $P(3_1)=(2a^2-a^4)+
z^2a^2$.
\end{ex}

\medskip
\begin{rem}
Smoothing a crossing from a state $S$ changes the number of
components by one. Hence the cardinality $|S|$ and the difference
$m-c(S)$ (where $m$ is the number of components of $D$) are
congruent modulo 2. Therefore the HOMFLYPT polynomial $P(L)$ is even
in each of the variables $a$ and $z$ if $m$ is odd, and it is an odd
polynomial if $m$ is even.
\end{rem}
\begin{rem}
The negative powers of $z$ come from the factors
$\bigl(\frac{a-a^{-1}}{z}\bigr)^{c(S)-1}$. A smoothing of a crossing
$x\in S$ may increase $c(S)$ by one, however this increment will be
compensated by a local weight $\lw$. As a consequence we have that
the lowest power of $z$ in the HOMFLYPT polynomial of $L$ is at
least $-m+1$. In particular the HOMFLYPT polynomial $P(K)$ of a knot
$K$ is a genuine polynomial in $z$, i.e., does not contain terms
with negative powers of $z$.
\end{rem}

\section{Gauss diagrams} \label{s:gaus-diagr}

\begin{defn}
Gauss diagrams provide an alternative and more combinatorial way to
present links. For a link diagram $D$ consider a collection of
(counterclockwise) oriented circles parameterizing it. Two preimages
of a crossing of $D$ we unite in a pair and connect them by an arrow
pointing from the overpassing preimage to the underpassing one. To
each arrow we assign a sign $\pm1$ of the corresponding crossing.
The result is called the {\it Gauss diagram} $G_D$ of the link
diagram $D$. A link can be uniquely reconstructed from the
corresponding Gauss diagram \cite{GPV}.
\end{defn}

For example, a Gauss diagram of the trefoil looks as follows.
$$D=\ \risS{-15}{31}{}{40}{20}{20} \hspace{3cm}
G_{D}=\ \risS{-15}{gd-31bp}{
         \put(-135,-15){\mbox{\tt A knot and its Gauss diagram}}}{40}{27}{35}\label{d3-1}
$$

Not every diagram with arrows is realizable as a Gauss diagram of a
classical link. For example, $\risS{-5}{gauss-nr}{}{15}{14}{8}\ $ is
not realizable regardless of signs of its arrows. An {\it abstract
Gauss diagram}, or an {\it arrow diagram} is a generalization of a
notion of Gauss diagram, in which we forget about realizability. In
other words, an arrow diagram consists of a number of oriented
circles with several arrows connecting pairs of distinct points on
them. The arrows are equipped with signs $\pm1$. We consider these
diagrams up to orientation preserving diffeomorphisms of the
circles.

We are going to work with {\it ordered Gauss diagrams}, i.e. Gauss
diagrams with ordered circles and a base point $\bp_1, \bp_2, \dots,
\bp_m$\, on each circle corresponding to an ordering of $D$.
Similarly, an {\it ordered arrow diagram} is an arrow diagram
equipped with an ordering of the circles and a base point (different
from the end points of the arrows) on each of them.

Two Gauss diagrams represent isotopic links if and only if they are
related by a finite number of Reidemeister moves (see, for example,
\cite{GPV,Oll,CDbook}).
$$\Omega_1:\ \risS{-15}{virrI}{
            \put(-2,5){\mbox{$\scriptstyle \e$}}
            \put(85,5){\mbox{$\scriptstyle \e$}}
             }{100}{18}{15} \hspace{2cm}
\Omega_2:\ \risS{-15}{virrII1}{
            \put(6,18){\mbox{$\scriptstyle \e$}}
            \put(22,18){\mbox{$\scriptstyle -\e$}}
             }{95}{0}{0}
$$
$$\Omega_3: \risS{-18}{virrIII}{}{120}{20}{22}\ .
$$
Note that the segments involved in $\Omega_2$ or $\Omega_3$ may lie
on the different components of the link. So the order in which they
are traced along the link may be arbitrary.

An {\it ordered link} is an equivalence class of ordered Gauss
diagrams modulo Reidemeister moves which do not involve base points.
For $m=1$ this notion is equivalent to the notion of long knots
defined as embeddings of $\R$ into $\R^3$ which coincide with a
standard embedding (say, an $x$-axis) outside a compact. It is well
known that for classical knots the theories of long and closed knots
coincide.

\section{State models on Gauss diagrams} \label{s:jaeger-ga}

All notions and constructions of Section \ref{s:jaeger} have a
straightforward translation to the language of Gauss diagrams.

A {\it state} $S$ on an abstract Gauss diagram $G$ is a subset of
its arrows. Let $G(S)$ be the abstract Gauss diagram obtained by
doubling every arrow in $S$ as in the figure
$$\risS{-6}{arrow}{}{30}{0}{5}\qquad
 \risS{-2}{totor}{}{25}{0}{0}\qquad\risS{-6}{darrow}{}{30}{0}{0}\ ,
$$
and let $c(S)$ be the number of its circles. The ordering of $G$
induces an ordering of $G(S)$. The local weight $\lwg$ of an arrow
$\a$ of $G$ in general depends on whether $\a$ belongs to $S$, on
the first passage in a neighborhood of $\a$ in the tracing of
$G(S)$, and on the sign $\e$ of $\a$. Given a table of such local
weights, we denote by $\plwg:=\prod_\a \lwg$ the product of local
weights of all arrows and define a polynomial $P(G)$ by
\begin{equation}\label{eq:PG}
P(G):=\sum_S\ \ \plwg\cdot \left(\frac{a-a^{-1}}{z}\right)^{c(S)-1}
\end{equation}

The table of local weights for the HOMFLYPT state model (readily
taken from Section \ref{s:jaeger}) is shown below.
\begin{equation}\label{eq:lwg}
\begin{array}{c||c|c|c|c}
\rb{7pt}{First passage:}&
 \risS{0}{fp-b}{}{35}{25}{3}&\risS{0}{fp-t}{}{35}{0}{0}&
 \risS{0}{fp-l}{}{35}{0}{0}&\risS{0}{fp-r}{}{35}{0}{0} \\ \hline\hline
\risS{-8}{cr_p-gd}{}{40}{22}{15} & a^{-1}z & 0 & a^{-2} & 1 \\
\hline \risS{-8}{cr_m-gd}{}{40}{22}{15} & -az & 0 & a^2 & 1
\end{array}
\end{equation}

\medskip
\begin{ex} For the Gauss diagram of the trefoil the states with non-zero
weights are the following.
$$\begin{array}{r||c|c|c}
\mbox{States of\quad } \risS{-10}{gd-31bp}{}{30}{20}{18}\ :&
   \risS{-10}{s31-0}{}{30}{0}{0} & \risS{-10}{s31-1}{}{33}{0}{0} &
   \risS{-10}{s31-12}{}{32}{0}{0} \\
\mbox{Weights}\ : & 1\cdot a^2\cdot 1&
    1\cdot (-az)\cdot a^2\cdot \Bigl(\frac{a-a^{-1}}{z}\Bigr)&
    1\cdot (-az)\cdot (-az)
\end{array}
$$
Hence, $P(G)=(2a^2-a^4)+ z^2a^2$.
\end{ex}
The HOMFLYPT polynomial defined by this state model may be called
the {\em descending} HOMFLYPT polynomial. An {\em ascending}
HOMFLYPT polynomial may be defined in a similar way, interchanging
the values of the first two columns and the last two columns in the
table (\ref{eq:lwg}) of local weights. For classical links these two
polynomials coincide.

\section{Vassiliev invariants coming from the HOMFLYPT polynomial} \label{s:vas-from-homfly}

\subsection{HOMFLYPT power series}
A standard way \cite{BN,BL} to relate Vassiliev invariants to the
HOMFLYPT polynomial is to make a substitution $a=e^{Nh}$,
$z=e^h-e^{-h}$ and then take the Taylor expansion of $P(L)$ in the
variable $h$. The coefficient at $h^n$ turns out to be a Vassiliev
invariant of order $\leqslant n$ which depends on a parameter $N$.

In this paper we are working in a different way, following
\cite{G2}. Namely, we substitute $a=e^h$ and take the Taylor
expansion in $h$. The result will be a Laurent polynomial in $z$ and
a power series in $h$. Let $p_{k,l}(L)$ be its coefficient at
$h^kz^l$. It is not difficult to see that for any link $L$ the total
degree $k+l$ is not negative. (It also follows from the Jaeger model
in Section \ref{s:jaeger}.)

\begin{lemG}[\cite{G2}]
$p_{k,l}(L)$ is a Vassiliev invariant of order $\leqslant k+l$.
\end{lemG}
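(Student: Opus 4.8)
The plan is to show that $p_{k,l}$ satisfies the defining property of a Vassiliev invariant of order $\leqslant k+l$, namely that it vanishes on any singular link with $k+l+1$ double points. Recall that a Vassiliev invariant of order $\leqslant n$ is one whose extension to singular knots (via the Vassiliev skein relation $v(\text{double point}) = v(\text{positive crossing}) - v(\text{negative crossing})$) vanishes whenever there are more than $n$ double points. So the first thing I would do is apply this extension to the substituted HOMFLYPT series and track what a single double-point resolution does to each term.

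The key computation is to see how the difference $P(L_+) - P(L_-)$ behaves under the substitution $a = e^h$. Here I would use the skein relation \eqref{eq:skein}: writing it as $a P(L_+) - a^{-1} P(L_-) = z P(L_0)$, one can solve to express the crossing difference $P(L_+) - P(L_-)$ in terms of $P(L_0)$ and the already-present values. A cleaner route is to note that resolving a double point produces a factor that is divisible by a quantity vanishing to appropriate order in $h$. Concretely, after setting $a = e^h$, I expect $P(L_+) - P(L_-)$ to be expressible as a combination involving $z P(L_0)$ together with terms carrying a factor of $(e^h - e^{-h})$ or similar, each of which raises the minimal total degree in the bigraded expansion by at least one. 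The precise claim I would establish is: \emph{resolving one double point raises the minimal total $h,z$-degree of the bigraded expansion by at least $1$}, so that each coefficient $p_{k,l}$ of total degree $k+l$, when evaluated on a link with $N$ double points, is built from $P$-values whose lowest total degree is at least $N$; hence $p_{k,l}$ vanishes once $N > k+l$.

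To make this rigorous I would proceed by induction on the number of double points, using the multiplicativity of the degree shift. The base case is the ordinary (nonsingular) HOMFLYPT series, where the total degree $k+l$ is nonnegative as already observed in the excerpt. For the inductive step I would show that the operator ``resolve one double point'' sends a series all of whose terms have total degree $\geqslant d$ to a series all of whose terms have total degree $\geqslant d+1$. The heart of this is the algebraic fact about the skein relation after the substitution $a = e^h$: the crossing-change difference carries a factor raising the degree. I would verify this directly from \eqref{eq:skein}, or alternatively from the Jaeger state model, where a double point corresponds to an extra smoothed crossing and the compensating local weight (of the form $\pm a^{-\e} z$ or its $h$-expansion) supplies exactly the degree increment.

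The main obstacle, I expect, is the bookkeeping of the degree shift under the nonstandard substitution. Unlike the standard substitution $a = e^{Nh}$, $z = e^h - e^{-h}$ — where $z$ itself becomes a power series vanishing linearly in $h$ and the degree count is transparent — here $z$ is left untouched, so the two gradings interact asymmetrically and I must be careful that the crossing-change difference genuinely increases the \emph{total} degree $k+l$ rather than trading $h$-degree for $z$-degree in a way that could keep the total constant. Establishing the sharp ``$+1$ in total degree'' estimate for the crossing difference, and confirming it is compatible with the negative powers of $z$ allowed by the unlink factors $\bigl(\frac{a-a^{-1}}{z}\bigr)^{c(S)-1}$, is the delicate point; once that estimate is in hand, the induction closes immediately.
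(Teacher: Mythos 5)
Your proposal is correct and follows essentially the same route as the paper: after substituting $a=e^h$, the skein relation gives $P(L_+)-P(L_-)=zP(L_0)+h(\mbox{terms})$, so each double-point resolution raises the minimal total $(h,z)$-degree by at least $1$; combined with the fact that every genuine link has non-negative total degree, the alternating sum over resolutions of $k+l+1$ double points has all monomials of total degree $\geqslant k+l+1$, killing the coefficient of $h^kz^l$. The ``delicate point'' you flag about trading $h$-degree for $z$-degree is resolved exactly as you suggest, by the non-negativity of the total degree of $P$ on honest links, which is the same observation the paper relies on.
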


\begin{proof}
Indeed, plugging $a=e^h$ into the skein relation we get
$$P(\lrints)\ -\ P(\rlints)\ = \  zP(\twoup) + h(\mbox{some terms})\ .$$
Since all terms of the HOMFLYPT polynomial have non-negative total
degree in $z$ and $h$, the terms of the right hand side has degree
at least $1$. Therefore, if we change $n+1$ crossings in different
places then the alternating sum of the $2^n$ polynomials will have
the degree of its monomials $\geqslant n+1$. Hence the coefficient
at any degree $n$ term will be zero.
\end{proof}

\begin{rem}
After substitution $a=e^h$ and the Taylor expansion in $h$ the
factor $\frac{a-a^{-1}}{z}$ becomes $ \frac{2h + \dots}{z}$. In
other words its total degree in $h$ and $z$ is not negative.
Therefore, the total degree $k+l$ of the monomial $h^kz^l$ of $P(L)$
is not negative, however the exponent $l$ of $z$ may be as negative
as $-k+1$.
\end{rem}

Our next goal is to describe the Gauss diagram formulas for
$p_{k,l}(L)$. Note that the case $k=0$ corresponds to the
substitution $a=1$ into the HOMFLYPT polynomial, i.e. to the Conway
polynomial. Thus $p_{0,l}(L)$ are  coefficients of the Conway
polynomial for which the Gauss diagram formulas were found in
\cite{CKR}. Thus our work may be considered as a generalization of
\cite{CKR}.

\subsection{Gauss diagram formulas for Vassiliev invariants} \label{s:arrow-diagr}
Let $\A$ be a free $\Z$-module generated by ordered arrow diagrams
with $m$ circles.  Define a map $I:\A\to\A$ by
$I(G):=\sum_{A\subseteq G} A$ for any (abstract, ordered) Gauss
diagram $G$, and extend it to $\A$ by linearity. Here $A\subseteq G$
means the arrow subdiagram $A$ containing the same circles as the
whole diagram $G$ but only a subset of arrows of $G$ with their
signs. A natural scalar product on $\A$ is given by $(A,B):=0$ if
$A$ is not equal to $B$, and $(A,B):=1$ if $A=B$ for a pair of arrow
diagrams $A$ and $B$. Let us define a pairing
$\scp{A}{G}:=(A,I(G))$.

\medskip
\begin{defn}
Let $A$ be a fixed element of $\A$. By {\it a Gauss diagram formula}
we mean a function $\I_A$ on abstract Gauss diagrams defined by
$\I_A: G\mapsto \scp{A}{G}$.
\end{defn}

\medskip
If $A$ is chosen at random then $\I_A(G)$ usually changes under
Reidemeister moves and thus does not define any link invariant.
However, for some special choice of $A$ it might be a link
invariant. According to \cite{GPV} any Vassiliev invariant of long
knots can be expressed by a Gauss diagram formula. In the following
sections we describe an algorithm for finding such formulas for
invariants $p_{k,l}(L)$ coming from the HOMFLYPT polynomial.

For shortness of notation, further we will use {\it unsigned arrow
diagrams}, understanding by that a linear combination of arrow
diagrams with all possible choices of signs and appearing with a
coefficients $\pm1$ depending on whether even or odd number of
negative signs were chosen.

\bigskip
\begin{ex} If $m=2$ and
$$A\ =\ \risS{-12}{adln}{}{70}{15}{12}\ :=\
 \risS{-12}{adln-p}{}{70}{0}{0}\ -\ \risS{-12}{adln-m}{}{70}{0}{0}\ ,
$$
then $\I_A(G)$ is equal to the linking number of components.

If $m=1$ and
$$A\ =\ \risS{-10}{gd2}{}{25}{0}{12}\ :=\ \risS{-10}{gd2-pp}{}{25}{0}{0}\
    -\ \risS{-10}{gd2-pm}{}{25}{0}{0}\ -\ \risS{-10}{gd2-mp}{}{25}{0}{0}\
    +\ \risS{-10}{gd2-mm}{}{25}{0}{0}\ ,
$$
then $\I_A(G)$ is equal to the second coefficient of the Conway
polynomial, $p_{0,2}(G)$ (see \cite{PV}).
\end{ex}

\section{Gauss diagram formulas for HOMFLYPT coefficients} \label{s:result}

Our aim is to figure out contributions of various arrow subdiagrams
to $p_{k,l}$, using the state model from Section \ref{s:jaeger-ga}.

Consider a state model on an arrow diagram $A$ with the following
table of local weights $\lwa$:
\begin{equation}\label{eq:lwa}
\begin{array}{c||c|c|c|c}
\rb{7pt}{First passage:}&
 \risS{0}{fp-b}{}{35}{25}{3}&\risS{0}{fp-t}{}{35}{0}{0}&
 \risS{0}{fp-l}{}{35}{0}{0}&\risS{0}{fp-r}{}{35}{0}{0} \\ \hline\hline
\risS{-8}{cr_p-gd}{}{40}{22}{15} & e^{-h}z & 0 & e^{-2h}-1 & 0 \\
\hline \risS{-8}{cr_m-gd}{}{40}{22}{15} & -e^hz & 0 & e^{2h}-1 & 0
\end{array}
\end{equation}

Let $\plwa=\prod_{\a\in A}\lwa$ and define a power series in $h$ and
$z$ by
\begin{equation}\label{e:coeff}
W(A)=\sum_S \plwa \left(\frac{e^h-e^{-h}}{z}\right)^{c(S)-1}
\end{equation}
Denote by $w_{k,l}$ the coefficient of $h^kz^l$ in $W(A)$, so that
$W(A)=\sum_{k,l} w_{k,l}(A) h^k z^l$.

\medskip
\begin{defn}
Now the linear combination $A_{k,l}\in\A$ can be defined as
follows.\vspace{-5pt}
$$\displaystyle A_{k,l} := \sum\ w_{k,l}(A)\cdot A$$
\end{defn}

\begin{thm}\label{th:main} Let $G$ be a Gauss diagram of an ordered
 link $L$. Then
$$p_{k,l}(L)=\I_{A_{k,l}}(G)=\scp{A_{k,l}}{G}\ .$$
\end{thm}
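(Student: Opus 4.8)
The plan is to unwind the definitions on both sides and show that the pairing $\scp{A_{k,l}}{G}$ reproduces exactly the coefficient-extraction of the HOMFLYPT power series from the Jaeger state model. First I would recall that after the substitution $a=e^h$, the table of local weights \eqref{eq:lwg} becomes precisely the table \eqref{eq:lwa}: the entries $a^{-1}z, -az, a^{-2}, a^2$ turn into $e^{-h}z, -e^hz, e^{-2h}, e^{2h}$. The only discrepancy is in the two ``non-$S$, approached on an overpass'' columns, where \eqref{eq:lwg} has weight $1$ while \eqref{eq:lwa} has weight $0$ in one slot and $e^{\pm2h}-1$ in the $S$-underpass slot. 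I would explain that this shift encodes the passage from the full Gauss diagram $G$ to the alternating subdiagram sum $I(G)=\sum_{A\subseteq G}A$: dropping an arrow from a subdiagram is exactly what the summand $-1$ in $e^{-2h}-1$ and the vanishing overpass weight accomplish.

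The key computation is to combine \eqref{eq:PG} (with $a=e^h$) for $P(G)$ with the definition of $W(A)$ in \eqref{e:coeff}. After the substitution, $P(G)=\sum_S \plwg\,\bigl(\frac{e^h-e^{-h}}{z}\bigr)^{c(S)-1}$, and extracting the coefficient of $h^kz^l$ gives $p_{k,l}(L)$. I would then show that summing $W(A)$ over all arrow subdiagrams $A\subseteq G$ recovers $P(G)$ after the variable change; concretely, I expect an identity of the form
\begin{equation}\label{eq:sumident}
\sum_{A\subseteq G} W(A) = P(G)\big|_{a=e^h}
\end{equation}
to hold state-by-state, because the $0/(e^{\pm2h}-1)$ entries in \eqref{eq:lwa} are engineered so that summing the local weight $\lwa$ over the two choices (arrow present in $A$ or absent) reproduces the weight $\lwg$ of \eqref{eq:lwg}. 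Taking the coefficient of $h^kz^l$ in \eqref{eq:sumident} then yields $p_{k,l}(L)=\sum_{A\subseteq G} w_{k,l}(A)$, and by the definition of $A_{k,l}$ and of the pairing $\scp{A_{k,l}}{G}=(A_{k,l},I(G))$ this equals $\I_{A_{k,l}}(G)$.

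The main obstacle I anticipate is verifying the telescoping identity \eqref{eq:sumident} carefully at the level of a single state $S$, matching circles and tracings. The subtlety is that smoothing an arrow depends only on whether it lies in $S$, whereas the subdiagram choice $A\subseteq G$ is an independent binary decision for each arrow; I must check that the first-passage type of each arrow (the column in the tables) is unchanged when I delete non-$S$ arrows from the ambient diagram, so that the factorization $\plwa=\prod_{\a\in A}\lwa$ genuinely splits the sum over $A$ into a product over arrows. This is where the observation that overpass weights in \eqref{eq:lwa} vanish is crucial: an arrow outside $S$ contributes $1=1+0$ only through the combination ``present with weight $e^{\pm2h}-1$ plus absent with weight $1$,'' forcing $\lwa=e^{\pm2h}-1$ in the $S$-underpass slot. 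Once this local bookkeeping is established, the global weight factors $\bigl(\frac{e^h-e^{-h}}{z}\bigr)^{c(S)-1}$ match automatically since $c(S)$ depends only on $S$ and not on the ambient arrows, and the theorem follows by linearity of $I$ and the coefficient extraction.
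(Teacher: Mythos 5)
Your proposal takes essentially the same route as the paper: the identity $\sum_{A\subseteq G}W(A)=P(G)\big|_{a=e^h}$ is exactly what the paper proves by writing $\lwg=(\lwg-1)+1$ for each arrow of $G\sminus S$, expanding the product into a sum over subdiagrams $A\supseteq S$, interchanging the sums over $S$ and $A$, and observing that comparison of tables \eqref{eq:lwg} and \eqref{eq:lwa} gives $\prod_{\a\in S}\lwg\prod_{\a\in A\sminus S}(\lwg-1)=\plwa$. Your point that the first-passage type of each arrow and the number $c(S)$ depend only on the smoothed circles (hence only on $S$, not on which other arrows are present in $A$ versus $G$) is the correct justification of the step the paper leaves implicit, so the argument is complete.
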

\begin{proof}
According to \ref{s:jaeger-ga}, the HOMFLYPT is equal to
$$P(G)=\sum_{S\subset G}\ \ \plwg\cdot \left(\frac{a-a^{-1}}{z}\right)^{c(S)-1}.
$$
We have
\begin{multline*}
\plwg=\prod_{\a\in G} \lwg=\prod_{\a\in S} \lwg \prod_{\a\in G\sminus S} \lwg=\\
=\prod_{\a\in S} \lwg \sum_{A\supset S}\left(\prod_{\a\in A\sminus S} (\lwg-1) \prod_{\a\in G\sminus A} 1\right)=\\
=\sum_{A\supset S}\left(\prod_{\a\in S} \lwg\prod_{\a\in A\sminus S}
(\lwg-1)\right).
\end{multline*}
Therefore
\begin{multline*}
P(G)=\sum_{S\subset G}\ \ \sum_{A\supset S}\left(\prod_{\a\in S} \lwg\prod_{\a\in A\sminus S} (\lwg-1)\right)\cdot \left(\frac{a-a^{-1}}{z}\right)^{c(S)-1}=\\
=\sum_{A\subset G}\ \ \sum_{S\subset A}\left(\prod_{\a\in S}
\lwg\prod_{\a\in A\sminus S} (\lwg-1)\right)\cdot
\left(\frac{a-a^{-1}}{z}\right)^{c(S)-1}
\end{multline*}
Comparing tables \eqref{eq:lwg} and \eqref{eq:lwa} of local weights,
we get
$$\prod_{\a\in S} \lwg\prod_{\a\in A\sminus S} (\lwg-1)=\prod_{\a\in A} \lwa=\plwa$$
Thus $$P(G)=\sum_{A\subset G}\ \ \sum_{S\subset A}\plwa\cdot
\left(\frac{a-a^{-1}}{z}\right)^{c(S)-1}$$ And the theorem follows.
\end{proof}

\subsection{Contributions of various diagrams to $A_{k,l}$}\label{sub:contrib}

A state $S$ of an arrow diagram $A$ is called {\it ascending}, if in
the tracing of $A(S)$ we approach a neighborhood of every arrow (not
only the ones in $S$) first at the arrow head. As easy to see from
the weight table, only ascending states contribute to $W(A)$. In
particular, the first end point of an arrow in $A$ (as we move from
the base point along the orientation) must be an arrow head.

Note that since $e^{\pm 2h}-1=\pm 2h + \mbox{(higher degree terms)}$
and $\pm e^{\mp h}z= \pm z + \mbox{(higher degree terms)}$, the
power series $W(A)$ starts with terms of degree at least $|A|$, the
number of arrows of $A$. Moreover, the $z$-power of $\plwa
\left(\frac{e^h-e^{-h}}{z}\right)^{c(S)-1}$ is equal to
$|S|-c(S)+1$. Therefore, for fixed $k$ and $l$, the weight
$w_{k,l}(A)$ of an arrow diagram may be non-zero only if $A$
satisfies the following conditions:
\begin{itemize}
\item[(i)]  $|A|$ is at most $k+l$;
\item[(ii)] there is an ascending state $S$ such that $c(S)=|S|+1-l$.\label{con-iii}
\end{itemize}

For diagrams of the highest degree $|A|=k+l$, the contribution of an
ascending state $S$ to $w_{k,l}(A)$ is equal to
$(-1)^{|A|-|S|}2^k\e(A)$, where $\e(A)$ is the product of signs of
all arrows in $A$. If two such arrow diagrams $A$ and $A'$ with
$|A|=k+l$ differ only by signs of arrows, their contributions to
$A_{k,l}$ differ by the sign $\e(A)\e(A')$. Thus all such diagrams
may be combined to the unsigned diagram $A$, appearing in $A_{k,l}$
with the coefficient $\sum_S(-1)^{|A|-|S|}2^k$ (where the summation
is over all ascending states of $A$ with $c(S)=|S|+1-l$).

Arrow diagrams with isolated arrows do not contribute to $A_{k,l}$.
Indeed, consider an arrow diagram $A\cup a$ with an isolated arrow
$a$. 
Every state $S$ of $A$ corresponds to two states of $A\cup
a$: $S$ and $S\cup a$.
Depending on the orientation of $\a$ their
weights will be either both $0$, or $(e^{-2\e h}-1)\plwa$ and $\e
e^{-\e h}z\frac{e^h-e^{-h}}{z}\plwa$. In both cases they sum up to
$0$, since $(e^{-2\e h}-1)+\e e^{-\e h}(e^h-e^{-h})=0$.

\subsection{Coefficients of the Conway polynomial}
The Conway polynomial is obtained from the HOMFLYPT polynomial by
setting $h=0$. So our formulas for $A_{0,l}$ are the Gauss diagram
formulas for coefficients of the Conway polynomial, discovered
earlier by Michael Khoury and Alfred Rossi \cite{CKR}. Indeed, only
states with $|S|=|A|$ and $c(S)=1$ contribute to $w_{0,l}(A)$. Since
these are diagrams of the highest degree, according to
\ref{sub:contrib} they may be combined into unsigned ascending
diagrams which appear with coefficients 1.

For example, in the case $m=1$ of long knots, states with $c(S)=1$
exist only for even number $l$ of arrows. For $l=2$ and $l=4$ the
resulting linear combinations $A_{0,l}$ are

$$\begin{array}{rcl}
A_{0,2} &=& \risS{-12}{cd22arw}{}{25}{15}{20}\ ;\hspace{1cm}
         A_{0,3} = 0\ ;\\
A_{0,4} &=& \ard{cd4-01arw}\
  +\ \ard{cd4-07arw1} + \ard{cd4-07arw2} + \ard{cd4-07arw3} + \ard{cd4-07arw4}+ \\
&&\hspace{-8pt}
   + \ard{cd4-05arw1} + \ard{cd4-05arw2} + \ard{cd4-05arw3} + \ard{cd4-05arw4}
   + \ard{cd4-05arw5} + \ard{cd4-05arw6} + \ard{cd4-05arw7} + \ard{cd4-05arw8} + \\
&&\hspace{-8pt}
   + \ard{cd4-06arw1} + \ard{cd4-06arw2} + \ard{cd4-06arw3} + \ard{cd4-06arw4}
   + \ard{cd4-06arw5} + \ard{cd4-06arw6} + \ard{cd4-06arw7} + \ard{cd4-06arw8}\ .
\end{array}$$

\section{Low degree examples}\label{s:example}

Let us describe the corresponding formulas for degree 2 and 3
invariants of knots, i.e. $k+l=2,3$, $m=1$. The case $A_{0,2}$ was
described above. A direct check shows that $A_{2,0}=0$. Let us
explicitly find the formula for $A_{1,2}$. The maximal number of
arrows is equal to 3. To get $z^2$ in $W(A)$ we need ascending
states with either $|S|=2$ and $c(S)=1$, or $|S|=3$ and $c(S)=2$. In
the first case the equation $c(S)=1$ means that the two arrows of
$S$ must intersect. In the second case the equation $c(S)=2$ does
not add any restrictions on the relative position of arrows. In
cases $|S|=|A|=2$ or $|S|=|A|=3$, since $S$ is ascending, $A$ itself
must be ascending as well.

For diagrams of the highest degree $|A|=1+2=3$, we should count
ascending states of unsigned arrow diagrams with the coefficient
$(-1)^{3-|S|}2$, i.e. $-2$ for $|S|=2$ and $+2$ for $|S|=3$. There
are only four types of (unsigned) 3-arrow diagrams with no isolated
arrows:
\def\lhd#1{\ \ \risS{-13}{#1}{}{25}{18}{10}\ \ }
\def\fhd#1{\risS{-20}{#1}{}{25}{12}{25}}
\def\shd#1{\ \risS{-12}{#1}{}{25}{15}{15}}
$$\lhd{bcd35-1}; \qquad \risS{-10}{bcd34-3}{}{27.5}{0}{0}\ \ , \qquad \lhd{bcd34-1},\qquad \risS{-10}{bcd34-2}{}{27.5}{0}{0}\ \ .
$$
Diagrams of the same type differ by directions of arrows.

For the first type, recall that the first arrow should be oriented
towards the base point; this leaves 4 possibilities for directions
of the remaining two arrows. One of them, namely $\lhd{aiv-n}$ does
not have ascending states with $|S|=2,3$. The remaining
possibilities, together with their ascending states, are shown in
the table:
$$\begin{array}[t]{||c|c|c|c||}\hline\hline
 \fhd{aiv-3} & \fhd{aiv-2} & \fhd{aiv-1} & \fhd{aiv-1} \\
 \shd{civ-3} & \shd{civ-2} & \shd{civ-1} & \shd{civ-4} \\
\hline\hline\end{array}
$$
The final contribution of this type of 3-arrow diagrams to $A_{1,2}$
is equal to\\
$$-2\ \risS{-12}{aiv-3}{}{25}{0}{15}\
  -2\ \risS{-12}{aiv-2}{}{25}{0}{0}\ .
$$

The remaining three types of 3-arrow diagrams differ by the location
of the base point. A similar consideration shows that 5 out of the
total of 12 arrow diagrams of these types, namely
$$\risS{-10}{aiii-n1}{}{27.5}{15}{15}\ ,\quad
  \risS{-10}{aiii-n2}{}{27.5}{0}{0}\ ,\qquad
  \risS{-13}{ai-n}{}{25}{0}{0}\ ,\qquad
  \risS{-10}{aii-n1}{}{27.5}{0}{0}\ ,\quad
  \risS{-10}{aii-n2}{}{27.5}{0}{0}
$$
do not have ascending states with $|S|=2,3$. The remaining
possibilities, together with their ascending states, are shown in
the table:
$$\begin{array}[t]{||c|c|c||c|c|c||c|c|c||}\hline\hline
 \risS{-17}{aiii-1}{}{27.5}{10}{25}& \risS{-17}{aiii-2}{}{27.5}{0}{0}
      & \risS{-17}{aiii-2}{}{27.5}{0}{0}
& \fhd{ai-1} & \fhd{ai-2} & \fhd{ai-3} &
 \risS{-17}{aii-2}{}{27.5}{0}{0}& \risS{-17}{aii-1}{}{27.5}{0}{0}
      & \risS{-17}{aii-3}{}{27.5}{0}{0} \\
 \risS{-9}{ciii-1}{}{27.5}{15}{15}& \risS{-9}{ciii-2}{}{27.5}{0}{0}
      & \risS{-9}{ciii-3}{}{27.5}{0}{0}
& \shd{ci-1} & \shd{ci-2} & \shd{ci-3} &
 \risS{-9}{cii-2}{}{27.5}{0}{0}& \risS{-9}{cii-1}{}{27.5}{0}{0}
      & \risS{-9}{cii-3}{}{27.5}{0}{0} \\
\hline\hline\end{array}
$$

The final contribution of this type of 3-arrow diagrams to $A_{1,2}$
is equal to
$$-2\ \risS{-10}{aiii-1}{}{27.5}{13}{15}
  -2\ \risS{-13}{ai-1}{}{25}{0}{0}
  -2\ \risS{-13}{ai-2}{}{25}{0}{0}
  +2\ \risS{-13}{ai-3}{}{25}{0}{0}
  -2\ \risS{-10}{aii-2}{}{27.5}{0}{0}\ .
$$

Besides 3-arrow diagrams, some 2-arrow diagrams contribute to
$A_{1,2}$ as well. Since $|A|=2<k+l=3$, contributions of 2-arrow
diagrams depend also on their signs. Such diagrams must be ascending
(since $|S|=|A|=2$) and should not have isolated arrows. There are
four such diagrams, looking like $\lhd{ad2}$\!\!,\vspace{4pt} but
with different signs $\e_1$, $\e_2$ of arrows. For each of them
$\plwa=\e_1\e_2 e^{-(\e_1+\e_2)h}z^2$. If $\e_1=-\e_2$, then
$\plwa=-z^2$, so the coefficient of $hz^2$ vanishes and such
diagrams do not occur in $A_{1,2}$. For two remaining diagrams with
$\e_1=\e_2=\pm1$, coefficients of $hz^2$ in $\plwa$ are equal to
$\mp2$ respectively.

Combining all the above contributions, we finally get
$$A_{1,2} = -2\Bigl( \risS{-12}{aiv-3}{}{25}{0}{15}+
   \risS{-12}{aiv-2}{}{25}{0}{0}+\risS{-10}{aiii-1}{}{27.5}{13}{15}
  +\risS{-13}{ai-1}{}{25}{0}{0}+\risS{-13}{ai-2}{}{25}{0}{0}
  -\risS{-13}{ai-3}{}{25}{0}{0}+\risS{-10}{aii-2}{}{27.5}{0}{0}
    + \shd{ad2pp} - \shd{ad2mm}\Bigr)\ .
$$

The invariant $\I_{A_{1,2}}=\scp{A_{1,2}}{\cdot}$ can be simplified
further. Note that for any classical Gauss diagram $G$, $\scp{
\risS{-13}{ai-2}{}{25}{17}{13}}{G}=
 \scp{ \risS{-13}{ai-3}{}{25}{0}{0} }{G}$.
This follows from the symmetry of the linking number. Indeed,
supposed we have matched two vertical arrows (which are the same in
both diagrams) with two arrows of $G$. Let us consider the
orientation preserving smoothings of the corresponding two crossings
of the link diagram $D$ associated with $G$. The smoothened diagram
$\widetilde{D}$ will have three components. Matchings of the
horizontal arrow of our arrow diagrams with an arrow of $G$ both
measure the linking number between the first and the third
components of $\widetilde{D}$, using crossings when the first
component overpasses (underpasses, respectively) the third
one.\vspace{4pt} Thus, as functions on classical Gauss diagrams,
$\risS{-13}{ai-2}{}{25}{17}{5}$\ is equal to
$\risS{-13}{ai-3}{}{25}{17}{5}$\ and we have
$$p_{1,2}(G) = -2\langle \risS{-12}{aiv-3}{}{25}{0}{15}+
   \risS{-12}{aiv-2}{}{25}{0}{0}+\risS{-10}{aiii-1}{}{27.5}{13}{15}
  +\risS{-13}{ai-1}{}{25}{0}{0}+\risS{-10}{aii-2}{}{27.5}{0}{0}
    + \shd{ad2pp} - \shd{ad2mm}\ , G\rangle\ .
$$

In a similar way one may check that $A_{3,0}=-4 A_{1,2}$.

\begin{ex}
Let us compute the coefficients of $hz^2$ and $h^3$ of the HOMFLYPT
polynomial on the trefoil from Section \ref{d3-1}, see page
\pageref{d3-1}.
$$\scp{A_{1,2}}{G}= 2\scp{\shd{ad2mm}}{G}=2\qquad
\mbox{and}\qquad \scp{A_{3,0}}{G}= -8\ ,
$$
It is easy to verify these coefficients in the Taylor expansion of
$P(3_1)=(2e^{2h}-e^{4h})+ e^{2h}z^2$.
\end{ex}


\begin{thebibliography}{ABC}

\bibitem[BN]{BN} D.~Bar-Natan, {\em On the Vassiliev knot invariants},
     Topology, {\bf 34} (1995) 423--472.

\bibitem[BL]{BL} J.~S.~Birman and X.-S.~Lin, {\it Knot polynomials
       and Vassiliev's invariants}, Invent. Math. {\bf 111} (1993) 225--270.

\bibitem[CDBooK]{CDbook} S.~Chmutov, S.~Duzhin, J.~Mostovoy,
   {\it CDBooK. Introduction to Vassiliev Knot invariants.}
   (a preliminary draft version of a book about Chord Diagrams.)
    \verb#http://www.math.ohio-state.edu/~chmutov/preprints/#.

\bibitem[CKR]{CKR} S.~Chmutov, M.~Khoury, A.~Rossi, {\it Polyak-Viro formulas for
   coefficients of the Conway polynomial}. Preprint {\tt arXiv:math.GT/0810.3146}.
   To appear in the Journal of Knot Theory and its Ramifications.

\bibitem[G1]{G} M.~Goussarov, {\it Finite type invariants are
   presented by Gauss diagram formulas}, Sankt-Petersburg Department of Steklov
   Mathematical Institute preprint (translated from Russian by O. Viro), December 1998.\\
   \verb#http://www.math.toronto.edu/~drorbn/Goussarov/#.

\bibitem[G2]{G2} M.~Goussarov, {\it On $n$-equivalence of knots and invariants of finite degree}, Topology of manifolds and varieties, Adv. Soviet Math. {\bf 18} AMS (1994) 173--192.

\bibitem[GPV]{GPV} M.~Goussarov, M.~Polyak and O.~Viro, {\it Finite type
   invariants of classical and virtual knots}, Topology {\bf 39} (2000) 1045--1068.

\bibitem[HOM]{HOM} P.~Freyd, D.~Yetter, J.~Hoste, W.~B.~R.~Lickorish,
   K.~Millett, A.~Ocneanu, {\it A new polynomial invariant of knots and links},
   Bull. AMS {\bf 12} (1985) 239--246.

\bibitem[H]{H} J.~Hoste, {\it A polynomial invariant of knots and links},
   Pacific J. Math.  {\bf 124}(2) (1986) 295--320.

\bibitem[J]{Ja} F.~Jaeger, {\it A combinatorial model for the Homfly polynomial},
   European J.Combinatorics {\bf 11} (1990) 549--558.

\bibitem[LM]{LM} W.~B.~R.~Lickorish, K.~Millett, {\it A polynomial invariant of
   oriented links}, Topology {\bf 26}(1) (1987) 107--141.

\bibitem[\"{O}]{Oll} Olof-Petter \"{O}stlund, {\it
   Invariants of knot diagrams and relations among Reidemeister moves},
   Journal of Knot Theory and its Ramifications, {\bf 10}(8) (2001)
   1215--1227. Preprint {\tt arXiv:math.GT/0005108}.

\bibitem[PV]{PV} M.~Polyak and O.~Viro,
   {\it Gauss diagram formulas for Vassiliev invariants},
   Int. Math. Res. Notes {\bf 11} (1994) 445--454.

\bibitem[PT]{PT} J.~Przytycki, P.~Traczyk, {\it Invariants of links
   of the Conway type}, Kobe J. Math. {\bf 4} (1988) 115--139.

\end{thebibliography}
\end{document}